\def\pg{\mathhexbox278}
\newcommand{\Con}{\ensuremath{\mathcal{C}}}
\newcommand{\Conc}{\ensuremath{\mathcal{C}_{\text{c}}}}
\newcommand{\Cinf}{\ensuremath{\mathcal{C}^\infty}}
\newcommand{\D}{\ensuremath{\mathcal{D}}}
\newcommand{\G}{\ensuremath{\mathcal{G}}}
\renewcommand{\S}{\mathscr{S}}
\newcommand{\mb}[1]{\ensuremath{\mathbb{#1}}}
\newcommand{\N}{\mb{N}}
\newcommand{\R}{\mb{R}}
\newcommand{\C}{\mb{C}}
\renewcommand{\d}{\ensuremath{\partial}}
\newfont{\bl}{msbm10 scaled \magstep2}
\newtheorem{theorem}{Theorem}[section]
\newtheorem{proposition}[theorem]{Proposition}
\newtheorem{corollary}[theorem]{Corollary}
\theoremstyle{definition}
\newtheorem{remark}[theorem]{Remark}
\newtheorem{example}[theorem]{Example}
\newcommand{\beq}{\begin{equation}}
\newcommand{\eeq}{\end{equation}}
\newcommand{\dis}[2]{\langle #1 , #2 \rangle}
\newcommand{\inp}[2]{\langle #1 | #2 \rangle}  
\newcommand{\notmid}{\mid\kern-0.5em\not\kern0.5em}
\newcommand{\norm}[2]{{\| #1 \|}_{#2}}
\newcommand{\al}{\alpha}
\newcommand{\be}{\beta}
\newcommand{\ga}{\gamma}
\newcommand{\de}{\delta}
\newcommand{\eps}{\varepsilon}
\newcommand{\vphi}{\varphi}
\newcommand{\la}{\lambda}
\newcommand{\ovl}[1]{\overline{#1}}
\begin{document}

\pagestyle{plain}

\title{The Cauchy problem for Schr\"odinger-type partial differential operators with generalized functions in the principal part and as data}

\author{G\"unther H\"ormann}

\dedicatory{Dedicated to Peter Michor on the occasion of his 60th birthday\\
celebrated at the Central European Seminar in Mikulov, Czech Republic, May 2009}

\address{Fakult\"at f\"ur Mathematik\\
Universit\"at Wien}

\email{guenther.hoermann@univie.ac.at}

\thanks{Research supported by the FWF START grant Y237}

\subjclass[2000]{Primary: 46F30; Secondary: 35D99}

\date{\today}

\begin{abstract}
We set-up and solve the Cauchy problem for Schr\"odinger-type differential operators with generalized functions as coefficients, in particular, allowing for distributional coefficients in the principal part. Equations involving such kind of operators appeared in models of deep earth seismology. We prove existence and uniqueness of Colombeau generalized solutions and analyze the relations with classical and distributional solutions. Furthermore, we provide a construction of generalized initial values that may serve as square roots of arbitrary probability measures. 
\end{abstract}

\maketitle

\section{Introduction}

Partial (and pseudo-) differential operators of
Schr\"odinger-type  with variable coefficients in the principal part arise in the form of so-called \emph{paraxial equations} in models of wave propagation. These are based on narrow-angle  symbol approximations of wave operators and have been applied in integrated optics, underwater acoustic tomography, reflection seismic imaging, time-reversal mirror experiments (cf.\ \cite{BPR:02,Claerbout:85,WMcC:83}), and recently in \cite{dHHO:08} to seismic wave propagation near the core-mantle boundary inside the Earth (at approximately 2800 km depth). 
Since paraxial equations are used to split the wave fields according to a
prescribed principal direction of propagation they are also called \emph{one-way wave equations}. 
The leading-order approximation leads to model equations of Schr\"odinger-type, where the material properties are still reflected by the  regularity structure of the coefficients in the principal part.
Under strong smoothness conditions on the wave speed function (i.e., the coefficients in the original wave operators) well-posedness of the one-way wave Cauchy problems has been discussed in \cite{HT:88,TH:86}. In \cite{dHHO:08} this smoothness
assumption has been considerably relaxed by allowing the coefficients to be of H\"older- or Sobolev-type regularity below log-Lipschitz continuity. Recall that in general existence of distributional solutions to the second-order wave equation may fail below log-Lipschitz regularity of the coefficients (cf.\ \cite{CL:95}). Beyond such coefficient regularity barrier unique solvability of the Cauchy problem holds in the sense of Colombeau generalized functions (cf.\ \cite{GMS:09}), even in a covariant setting.

The relevance of coefficients with low H\"older or Sobolev regularity has been shown in a variety of geophysical applications, e.g.\ in the study of phase transitions in Earth's lowermost mantle  (\cite{science:07}) or in exploration geophysics (\cite{DBR:98,Herrmann:97a,HdH:01c,LH:95,SM:93,SMC:94,Wapenaar:98}).  
The model analyzed in \cite{dHHO:08} describes the paraxial approximation to seismic wave propagation near the core-mantle boundary  by the following Schr\"odinger-type equation with depth $z$ as evolution variable, the $2$-dimensional lateral $x$-variable, and a pseudodifferential time-frequency dependence
\beq \label{PsDO}
  \d_z u - \mathrm{i}\, \big(\d_{x_1} (c_1 \d_{x_1} u) + 
     \d_{x_2} (c_2 \d_{x_2} u)\big) = 0,
\eeq
where $c_1(z,x,\tau)$ and $c_2(z,x,\tau)$ are strictly positive symbols of order $-1$ in $\tau$, continuously differentiable with respect to $z$, but  a non-Lipschitz dependence regarding the $x$-regularity, e.g.\ of  Sobolev type $H^{r+1}$ or H\"older continuous with exponent $r$, where $r < 1$.  Roughly speaking, the main result of \cite{dHHO:08} is the unique solvability of the corresponding Cauchy problem for the wave component $u$ in  $\Con([0,\infty[, \S'(\R;H^2(\R^2))$ (i.e., continuous dependence on $z$, arbitrary $\S'$ quality with respect to time or frequency, but lateral $x$-regularity of class $H^2$), given initial data in $L^1(\R,H^2(\R^2))$ (at $z = 0$) and a right-hand side in $\Con^1([0,\infty[,L^1(\R,L^2(\R^2)))$.
The emphasis in this result is on the  (H\"older or) Sobolev regularity of the solution (with respect to $x$) in relation to the initial data regularity under lowest possible regularity assumptions on the coefficient (cf.\ the brief discussion on inverse analysis of medium regularity at the end of \cite{dHHO:08}).

In the current paper we strive for a widening of the possible range of applications and thereby also simply for a more unified general mathematical set-up. Thus, we allow for discontinuous or distributional coefficients, initial data, and right-hand sides. In fact, a natural method is then to place the whole Cauchy problem into the context of nonlinear theories of generalized functions, in particular the differential algebras of Colombeau generalized functions (cf.\ \cite{Colombeau:83,Colombeau:84,Colombeau:92,GKOS:01,O:92}). 
In view of models from geophysics this amounts to including  discontinuous material properties (e.g.\ at fault zones and geological boundaries or cracks) and Dirac-type data such as strong impulsive sources (e.g.\ explosions or earth quakes).
A quantum field theoretic application of Colombeau generalized functions in terms of regularizations of Wightman distributions has been given in \cite{GOT:00}. 
In the simpler context of quantum mechanics one will of course be interested in allowing for a singular zero order term in the operator to represents a potential and non-smooth data corresponding to an initial probability distribution.  The square of the modulus of a solution to the standard Schr\"odinger equation is usually interpreted as (time evolution of a) probability density. Why should one not turn this around in the sense of allowing now for generalized initial data which represent a ``square root of a given arbitrary probability measure''?\footnote{My thanks go to Gebhard Gr\"ubl and Michael Oberguggenberger who brought up this viewpoint in joint discussions at the University of Innsbruck in April 2002.} 
We will show below how to construct a Colombeau generalized function whose square is associated with a given probability measure in the sense of distributional shadows. The issue of squares of distribution theoretic objects or generalized functions which contain or model measures (in particular, Dirac-type terms) also arises in general relativity theory, e.g.\ in the pseudo-Riemannian metric associated with impulsive pp-waves  (cf.\ \cite[Section 5.3]{GKOS:01} and \cite{KS:99,Steinbauer:98,SV:06}) and in the stress energy tensor corresponding to ultrarelativistic Reissner-Nordstr{\o}m fields (cf.\ \cite{Steinbauer:97}). A different regularization approach for powers of Dirac measures as  initial value appears in \cite{NPR:05} in the context of (semilinear) heat equations with singular potentials. 

In our analysis of the Cauchy problem we will return to denoting the evolution variable by $t$ and furthermore incorporate also first-order terms in the differential operator. Note that unlike in the specific geophysical model situation of \cite{dHHO:08} we neglect additional pseudodifferential aspects in the symbol, since these played only the role of an external parameter upon a partial Fourier transform.
In summary of the basic structure, let $T > 0$ be arbitrary. We consider the Cauchy problem for a generalized function $u$ on $\R^n \times [0,T]$ in the form
\begin{align}
  \d_t u - \mathrm{i}\, \sum_{k=1}^n   \d_{x_k} (c_k 
    \d_{x_k} u) 
    - i V u &= f \label{SCPDE}\\
    u \mid_{t=0} &= g, \label{SCIC} 
\end{align}
where $c_k$ 
 ($k=1,\ldots,n$), 
 $V$, and $f$ are generalized functions on $\R^n \times [0,T]$ and $g$ is a generalized function on $\R^n$.
We note that basics of a theory for abstract variational problems in the context of Colombeau-spaces have been presented in \cite{GV:09}. As indicated in  \cite[Section 8]{GV:09} the Lax-Milgram-type theorem established there provides solutions to Dirichlet-problems involving differential operators similar to the spatial derivatives appearing in the left-hand side of our above differential equation. However, this does not directly extend to our kind of  evolution problem and furthermore involves less strict solution spaces than we are willing to accept here, namely $\G_{H^1}$ as compared to $\G_{H^\infty}$. 
Colombeau-generalized solutions to linear and nonlinear Schr\"odinger equations with constant coefficient principal part have been constructed previously in \cite{Bu:96,Sto:06b,Sto:06a}. 

We may point out that our current paper has both focus and aim different from the enormous literature on the prominent questions concerning  properties of $L^p$-solutions and spectral theory for Schr\"odinger equations with standard  principal part (i.e., with constant or smoooth coefficients) and singular potentials. Instead we investigate here the feasibility of extending the solution concept and the basic analysis to the case of generalized functions and, in particular, non-smooth or highly singular principal parts.
The plan of the paper is as follows. In Section 2 we briefly review the regularization approach to generalized functions in the sense of Colombeau and show how square roots of probability measures can be implemented in this framework. Section 3 presents the main result of unique existence of generalized solutions to our Schr\"odinger-type Cauchy problem (\ref{SCPDE}-\ref{SCIC}). Furthermore, we discuss the relation of Colombeau generalized solutions with classical and distributional solution concepts.


\section{Regularization of coefficients and data}

The basic idea of regularization methods is to replace non-smooth data by approximating nets of smooth functions, e.g.\ instead of $g$ consider $\Cinf \ni g_{\eps} \to g$ ($\eps \to 0$). More generally, we may replace $g$ by a net $(g_{\eps})_{0< \eps \leq 1}$ of $\Cinf$ functions, convergent or not, but with \emph{moderate} asymptotics with respect to $\eps$ and identify regularizing nets whose differences compared to the moderateness scale are \emph{negligible}. For a modern introduction to Colombeau algebras we refer to \cite{GKOS:01}. Here we will also make use of constructions and notations from \cite{Garetto:05b}. 

Construction of generalized functions based on a locally convex topological vector space $E$:

Let $E$ be a locally convex topological vector space whose topology is given by the family of seminorms $\{p_j\}_{j\in J}$. The elements of  
$$
 \mathcal{M}_E := \{(u_\eps)_\eps\in E^{(0,1]}:\, \forall j\in J\,\, \exists N\in\N\quad p_j(u_\eps)=O(\eps^{-N})\, \text{as}\, \eps\to 0\}
$$
and
$$
 \mathcal{N}_E := \{(u_\eps)_\eps\in E^{(0,1]}:\, \forall j\in J\,\, \forall q\in\N\quad p_j(u_\eps)=O(\eps^{q})\, \text{as}\, \eps\to 0\},
$$ 
are called \emph{$E$-moderate} and \emph{$E$-negligible}, respectively. With operations defined componentwise, e.g.\ $(u_\eps) + (v_\eps) := (u_\eps + v_\eps)$ etc., $\mathcal{N}_E$ becomes a vector subspace of $\mathcal{M}_E$.  We define the \emph{generalized functions based on $E$} as the factor space $\G_E := \mathcal{M}_E / \mathcal{N}_E$. If $E$ is a differential algebra then $\mathcal{N}_E$ is an ideal in $\mathcal{M}_E$ and $\G_E$ becomes a differential algebra too.

By particular choices of $E$ we reproduce several standard Colombeau algebras of generalized functions. For example, $E=\C$ with the absolute value as norm yields the generalized complex numbers $\G_E = \widetilde{\C}$ and for any $\Omega \subseteq \R^d$ open, $E=\Cinf(\Omega)$ with the topology of compact uniform convergence of all derivatives provides the so-called special Colombeau algebra $\G_E=\G(\Omega)$. Recall that $\Omega \mapsto \G(\Omega)$ is a fine sheaf, thus, in particular, the restriction $u|_B$ of $u\in\G(\Omega)$ to an arbitrary open subset $B \subseteq \Omega$ is well-defined and yields $u|_B \in \G(B)$. Moreover, we may embed $\D'(\Omega)$ into $\G(\Omega)$ by appropriate localization and convolution regularization. 

In case $E \subseteq \D'(\Omega)$ certain generalized functions can be projected into the space of distributions by taking  weak limits: we say that $u \in \G_E$ is \emph{associated} with $w \in \D'(\Omega)$, if $u_\eps \to w$ in $\D'(\Omega)$ as $\eps \to 0$ holds for any (hence every) representative $(u_\eps)$ of $u$. This fact is also denoted by $u \approx w$. 

In the current paper we will consider open strips of the form $\Omega_T = \R^n \times\, ]0,T[ \subseteq \R^{n+1}$ (with $T > 0$ arbitrary) and employ the spaces $E = H^\infty({\Omega_T}) = \{ h \in \Cinf(\ovl{\Omega_T}) : \d^\al h \in L^2(\Omega_T) \; \forall \al\in \N^{n+1}\}$ with the family of (semi-)norms 
$$
  \norm{h}{H^k} = \Big( \sum_{|\al| \leq k} 
    \norm{\d^\al h}{L^2}^2\Big)^{1/2}
   \quad (k\in \N), 
$$
as well as   
$E = W^{\infty,\infty}({\Omega_T}) = \{ h \in \Cinf(\ovl{\Omega_T}) : \d^\al h \in L^\infty(\Omega_T) \; \forall \al\in \N^{n+1}\}$  with the family of (semi-)norms 
$$
  \norm{h}{W^{k,\infty}} = \max_{|\al| \leq k} \norm{\d^\al h}{L^\infty} \quad (k\in \N). 
$$
(Note that $\Omega_T$ clearly satisfies the strong local Lipschitz property \cite[Chapter IV, 4.6, p.\ 66]{Adams:75} and therefore the Sobolev embedding theorem \cite[Chapter V, Theorem 5.4, Part II, p.\ 98]{Adams:75} implies that every element of $H^\infty(\Omega_T)$ and $W^{\infty,\infty}(\Omega_T)$ belongs to $\Cinf(\ovl{\Omega_T})$.)

To avoid overloaded subscripts we shall make use of the following (abuses of) notation in the sequel
$$
  \G_{L^2}(\R^n \times [0,T]) := \G_{H^\infty({\Omega_T})}
     \quad\text{ and }\quad
   \G_{L^\infty}(\R^n \times [0,T]) := 
   \G_{W^{\infty,\infty}({\Omega_T})}.
$$
For example and in explicit terms we will represent a generalized initial value $g \in \G_{L^2}(\R^n \times [0,T])$ by a net $(g_\eps)$ with the moderateness property
$$
    \forall k \, \exists m: \quad 
    \norm{g_{\eps}}{H^k} = O(\eps^{-m}) \quad (\eps \to 0)
$$
and similarly for the right-hand side and the coefficients. Asymptotically negligible errors in the initial data, e.g.\ by using a representative $(\widetilde{g_{\eps}})$ instead, are expressed by estimates of the form
$$
    \forall k \, \forall p: \quad 
    \norm{g_{\eps} - \widetilde{g_{\eps}}}{H^k} = O(\eps^{p}) 
    \quad (\eps \to 0).
$$
Similar constructions and notations will be used in case of $E = H^\infty(\R^n)$ and $E = W^{\infty,\infty}(\R^n)$. Note that by Young's inequality (\cite[Proposition 8.9.(a)]{Folland:99}) any standard convolution regularization with a scaled mollifier of Schwartz class provides embeddings $L^2 \hookrightarrow \G_{L^2}$ and $L^p \hookrightarrow \G_{L^\infty}$ ($1 \leq p \leq \infty$).  

As an example of a detailed regularization model we construct Colombeau generalized positive square roots of arbitrary probability measures, which can serve as initial values in the Cauchy problem analyzed below.

\begin{proposition}\label{Wurzel} Let $\mu$ be a (Borel) probability measure on $\R^n$. Choose $\rho \in L^1(\R^n) \cap W^{\infty,\infty}(\R^n)$ to be positive with $\int \rho = 1$ and satisfying $\rho(x) \geq |x|^{-m_0}$  when $|x| \geq 1$ with some $m_0 > n$. Set  $\rho_\eps(x) = \frac{1}{\eps^n}\rho(\frac{x}{\eps})$ and $h_\eps := \mu * \rho_\eps$, then we have

\begin{enumerate}

\item   $h_\eps$ is positive and setting $\phi_{\eps} := \sqrt{h_\eps}$ the net $(\phi_\eps)_{\eps\in\,]0,1]}$ represents an element $\phi \in \G(\R^n)$ such that $\phi^2 \approx \mu$;

\item there exists a generalized function $g \in \G_{L^2}(\R^n)$  such that $g^2 \approx \mu$ and  the class of $(g_\eps|_{\Omega})_{\eps\in\,]0,1]}$ is equal to $\phi|_\Omega$ in $\G(\Omega)$, or by slight abuse of notation $g|_\Omega = \phi|_\Omega$, for every bounded open subset $\Omega \subseteq \R^n$.
\end{enumerate}
\end{proposition}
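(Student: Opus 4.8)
The plan is to establish the moderateness and association claims for the explicit net $h_\eps = \mu * \rho_\eps$ and its pointwise square root, the only genuine difficulty being a uniform-in-$x$ lower bound for $h_\eps$ that keeps the negative powers of $h_\eps$ (produced by differentiating $\sqrt{h_\eps}$) moderate.

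For part (1) I first record that $h_\eps$ is smooth and strictly positive: since $\rho$ is positive and $\d^\al\rho_\eps(x) = \eps^{-n-|\al|}(\d^\al\rho)(x/\eps)$ is bounded, $\d^\al h_\eps = \mu * \d^\al\rho_\eps$ is a well-defined continuous function with $\norm{\d^\al h_\eps}{L^\infty}\le \eps^{-n-|\al|}\norm{\d^\al\rho}{L^\infty}$ (using $\mu(\R^n)=1$), so $h_\eps\in\Cinf(\R^n)$ and the moderate upper bounds on all derivatives of $h_\eps$ are immediate; strict positivity $h_\eps>0$ follows because $\rho>0$ everywhere, whence $\phi_\eps=\sqrt{h_\eps}\in\Cinf(\R^n)$. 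To see that $(\phi_\eps)$ is $\Cinf(\R^n)$-moderate I would expand $\d^\al\phi_\eps$ by the Fa\`a di Bruno formula into a finite sum of terms $h_\eps^{1/2-j}\prod_i\d^{\al_i}h_\eps$ with $\sum_i\al_i=\al$ and $1\le j\le|\al|$. Each derivative factor is $O(\eps^{-n-|\al_i|})$ by the above, so the whole plan reduces to bounding $h_\eps^{1/2-j}=(h_\eps)^{-(j-1/2)}$, i.e.\ to a lower bound on $h_\eps$ on compact sets.

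That lower bound is the crux and is where the tail hypothesis $\rho(x)\ge|x|^{-m_0}$ enters. Fix $R\ge1$ and $x$ with $|x|\le R$, and choose $L$ with $\mu(B(0,L))\ge 1/2$. I split into two cases according to how much mass $\mu$ places near $x$. If $\mu(B(x,\eps))\ge 1/4$, then, since $\rho$ attains a positive minimum $c_1$ on the closed unit ball, $\rho_\eps(x-y)\ge c_1\eps^{-n}$ for $|x-y|<\eps$, whence $h_\eps(x)\ge \tfrac{c_1}{4}\eps^{-n}$. Otherwise $\mu(\{y\in B(0,L):|x-y|\ge\eps\})\ge 1/4$, and for such $y$ one has $|x-y|\le R+L$ and $|(x-y)/\eps|\ge 1$, so $\rho_\eps(x-y)\ge \eps^{m_0-n}|x-y|^{-m_0}\ge \eps^{m_0-n}(R+L)^{-m_0}$, giving $h_\eps(x)\ge \tfrac14(R+L)^{-m_0}\eps^{m_0-n}$. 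Taking the smaller of the two bounds yields, for $\eps\le1$, a uniform estimate $h_\eps(x)\ge c\,(R+L)^{-m_0}\eps^{m_0-n}$ on $|x|\le R$, hence $h_\eps^{-1}$ is bounded by a fixed negative power of $\eps$ on every fixed ball; feeding this back into the Fa\`a di Bruno sum shows every term, and thus $\sup_{|x|\le R}|\d^\al\phi_\eps(x)|$, is $O(\eps^{-N})$. This proves $(\phi_\eps)\in\mathcal M_{\Cinf(\R^n)}$ and defines $\phi\in\G(\R^n)$. The association $\phi^2\approx\mu$ is then the standard statement that $\mu*\rho_\eps\to\mu$ in $\D'(\R^n)$: for $\psi\in\Cinfc(\R^n)$ one rewrites $\dis{\mu*\rho_\eps}{\psi}=\int(\psi*\check\rho_\eps)\,d\mu$ with $\check\rho_\eps(z)=\rho_\eps(-z)$, notes $\psi*\check\rho_\eps\to\psi$ uniformly and $\norm{\psi*\check\rho_\eps}{L^\infty}\le\norm{\psi}{L^\infty}$, and passes to the limit by dominated convergence against the finite measure $\mu$.

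For part (2) the obstruction is that $\phi_\eps=\sqrt{h_\eps}$ need not lie in $H^\infty(\R^n)$: although $\ltwo{\phi_\eps}^2=\int h_\eps=1$, the heavy tails of $\rho$ force the negative powers of $h_\eps$ in the higher derivatives to grow at infinity. I would cure this by a slowly growing cutoff: fix $\chi\in\Cinfc(\R^n)$ with $\chi\equiv1$ on $B(0,1)$ and $\supp\chi\subseteq B(0,2)$, put $R_\eps=\eps^{-a}$ for some fixed $a>0$, $\chi_\eps(x)=\chi(x/R_\eps)$, and set $g_\eps:=\chi_\eps\phi_\eps$. Then $g_\eps\in\Cinfc(\R^n)\subseteq H^\infty(\R^n)$, and moderateness follows as before: the lower bound above, now applied on $B(0,2R_\eps)$, degrades only by the polynomial factor $(2R_\eps+L)^{-m_0}$, the cutoff satisfies $\norm{\d^\be\chi_\eps}{L^\infty}=R_\eps^{-|\be|}\norm{\d^\be\chi}{L^\infty}\le\norm{\d^\be\chi}{L^\infty}$, and the support has volume $O(R_\eps^n)$, so a Leibniz expansion together with the pointwise derivative bounds gives $\norm{g_\eps}{H^k}=O(\eps^{-m})$ for each $k$. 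Since $\chi_\eps\equiv1$ on $B(0,R_\eps)$ and $R_\eps\to\infty$, for every bounded open $\Om$ we have $g_\eps=\phi_\eps$ on $\Om$ for all small $\eps$, so $g_\eps|_\Om-\phi_\eps|_\Om$ is eventually $0$ and hence negligible, giving $g|_\Om=\phi|_\Om$ in $\G(\Om)$; the same eventual coincidence on the support of any test function shows $g_\eps^2=\chi_\eps^2h_\eps$ still converges to $\mu$ in $\D'(\R^n)$, i.e.\ $g^2\approx\mu$.
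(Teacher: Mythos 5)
Your proof is correct and follows essentially the same strategy as the paper's: global moderate $L^\infty$ bounds on $\d^\al h_\eps$ from Young's inequality for measures, a polynomial-in-$\eps$ lower bound for $h_\eps$ on compact sets coming from the tail condition $\rho(x)\geq |x|^{-m_0}$ (which controls the negative powers of $h_\eps$ in the Fa\`a di Bruno expansion of $\d^\al\sqrt{h_\eps}$), and a cutoff at radius growing polynomially in $1/\eps$ (you take $R_\eps=\eps^{-a}$, the paper takes dyadic radii $2^j$ for $2^{-j-1}<\eps\leq 2^{-j}$) to obtain the $H^\infty$-moderate representative $g_\eps$ agreeing with $\phi_\eps$ on every bounded open set. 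The only genuine (and minor) variations are your two-case derivation of the lower bound, which uses a ball $B(0,L)$ with $\mu(B(0,L))\geq 1/2$ and thereby avoids the paper's appeal to regularity of Borel measures to produce a compact set of mass $\geq 1/2$, and your continuous-parameter cutoff in place of the paper's piecewise-dyadic one; both choices are sound and change nothing structural.
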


\begin{proof} (1) The Borel measure $\mu$ is regular since it is finite and $\R^n$ is locally compact and second countable (\cite[Proposition 7.2.3]{Cohn:80} or also \cite[Theorem 7.8]{Folland:99}). Thus $\mu(\R^n) = 1$ implies that we can find a compact subset $A \subseteq \R^n$ such that $\mu(A) \geq 1/2$. A variant of Young's inequality for measures (cf.\ \cite[Proposition 8.49]{Folland:99}) applied to $\d^\al h_\eps = \mu * \d^\al \rho_\eps$ directly implies that the net $(h_\eps)$ is $\Cinf(\R^n)$-moderate, even with global $L^\infty$-norms, since $\norm{\mu * \d^\al \rho_\eps}{L^\infty} \leq \norm{\mu}{\text{var}}\, \norm{\d^\al \rho_\eps}{L^\infty} = \norm{\d^\al \rho}{L^\infty} \eps^{-n-|\al|}$. (Here $\norm{\ }{\text{var}}$ denotes the total variation norm on the space of finite Radon measures on $\R^n$, which gives $\norm{\mu}{\text{var}} = \mu(\R^n) = 1$ in case of the positive probability measure $\mu$). 

Furthermore, for any $x \in \R^n$ and $\eps > 0$ we have
$$
  h_\eps (x) = \mu * \rho_\eps(x) = 
  \int_{\R^n} \rho_\eps(x-y) d\mu(y)
  \geq \int_{A} \rho_\eps(x-y) d\mu(y) \geq
   \mu(A) \cdot \!\! \min_{z \in \{x\} - A} \rho_\eps(z) > 0
$$
and hence that $\phi_{\eps} = \sqrt{h_\eps} \in \Cinf(\R^n)$ ($0 < \eps \leq 1$).
Moreover, if $K \subseteq \R^n$ is compact then we have\footnote{We may exclude the case $A = K = \{x_0\}$ without loss of generality} with $r(K) := \max \{ |x|: x \in K - A \} > 0$ the estimate
\begin{multline} \tag{$*$}
\inf_{x \in K} h_\eps(x) =  
\inf_{x \in K} \mu * \rho_\eps(x) \geq 
     \mu(A) \cdot \inf_{x \in K}
     \min_{z \in \{x\} - A} \rho_\eps(z)
    \geq  \frac{1}{2} \cdot \min_{z \in K - A} 
      \frac{\rho(z/\eps)}{\eps^n} \\
    \geq \frac{1}{2 \eps^n} \min_{|z| \leq r(K)} \rho(z/\eps)
    \geq \frac{1}{2 \eps^n} \frac{\eps^{m_0}}{r(K)^{m_0}}
    = \frac{\eps^{m_0 - n}}{2\, r(K)^{m_0}}
    \qquad (0 < \eps < 1/r(K)).
\end{multline}
If $\al \in \N^n$ is arbitrary then $\d^\al \phi_\eps$ is a linear combination of terms of the form
$ {\d^{\be_1}h_\eps \cdots\d^{\be_k} h_\eps} /{h_\eps^{l/2}}$
with appropriate $\be_1,\ldots,\be_k \in \N^n$ and $l \in \N$. Hence $\Cinf(\R^n)$-moderateness of $(h_\eps)$ together with the lower bounds for $\inf_{x \in K} h_\eps(x)$ obtained above prove $\Cinf(\R^n)$-moderateness of $\phi_\eps$. 

Finally, we have that by construction $\phi^2$ is represented by $h_\eps = \mu * \rho_\eps$ and thus clearly converges to $\mu$ as $\eps \to 0$ in the sense of distributions.

(2) For $j\in\N$ let $K_j$ be the closed ball of radius $2^j$ around $0$ in $\R^n$. Let $\chi_0 \in \D(\R^n)$, $0 \leq \chi_0 \leq 1$ with $\chi_0(x) = 1$ when $|x| \leq 1$ and $\chi_0(x) = 0$ when $|x| > 2$ and put $\chi_j(x) := \chi_0(2^{-j} x)$ ($j \geq 1$). Thus we have $\chi_j(x) = 1$ when $|x| \leq 2^j$, $\chi_j(x) = 0$ when $|x| > 2^{j+1}$, and for every $\ga\in\N^n$
$$
  \norm{\d^\ga \chi_j}{L^2} = 
    2^{(- |\ga| + n/2) j} \norm{\d^\ga \chi_0}{L^2}.
$$

We define the net $(g_\eps)_{\eps\in\,]0,1]}$ of smooth functions on $\R^n$ by
$$
   g_\eps := \chi_j\, \phi_\eps, \text{ if }\; 2^{-j-1} < \eps \leq 2^{-j}
    \quad (j \in \N).
$$
For every $\eps \in\, ]0,1]$ the function $g_\eps$ is smooth and has compact support, hence belongs to $H^\infty(\R^n)$. We have to show that $(g_\eps)$ is an $H^\infty$-moderate net. 

Let $\al \in \N^n$ arbitrary and $2^{-j-1} < \eps \leq 2^{-j}$. By the Leibniz rule we have that $\d^\al g_\eps$ is a linear combination of terms of the form $\d^\beta \phi_\eps \cdot \d^{\al - \beta} \chi_j$ ($\be \leq \al$). Therefore we have by the triangle inequality that $\norm{\d^\al g_\eps}{L^2}$ is bounded above by a linear combination of terms of the form 
\begin{multline*}
   \norm{\d^\beta \phi_\eps \cdot \d^{\al - \beta} \chi_j}{L^2}
   \leq \norm{\d^{\al - \beta} \chi_j}{L^2} 
   \norm{\d^\beta \phi_\eps}{L^\infty(K_{j+1})}\\ \leq
   2^{nj/2} \, \norm{\chi_0}{H^{|\al|}} 
     \norm{\d^\beta \phi_\eps}{L^\infty(K_{j+1})}
   \leq \eps^{-n/2}\, \norm{\chi_0}{H^{|\al|}} 
      \norm{\d^\beta \phi_\eps}{L^\infty(K_{j+1})}
\end{multline*}
and it remains to prove $\eps$-moderate bounds for $\norm{\d^\beta \phi_\eps}{L^\infty(K_{j+1})}$ when $\beta \leq \al$ (note that the coupling of $j$ with $\eps$ has to be taken into account here). As already noted in (1) the latter is in turn bounded above by a linear combination of terms of the form
$$
    \sup_{x \in K_{j+1}} \frac{|\d^{\be_1}h_\eps(x)| \cdots
       |\d^{\be_k} h_\eps(x)|} {|h_\eps(x)|^{l/2}}
$$
with suitable $\be_1,\ldots,\be_k \in \N^n$ and $l \in \N$. Thanks to the estimate ($*$) in the proof of (1) we have  $1/|h_\eps(x)|^{l/2} \leq 2\, r(K_{j+1})^{m_0} \eps^{-(m_0 - n)}$ uniformly with respect to $x \in K_{j+1}$, where $r(K_{j+1}) = O(2^{j+1}) = O(1/\eps)$. Thus it remains to consider the factors $|\d^{\be_q}h_\eps(x)|$ in the above supremum. As has already been noted in the first paragraph of the proof of (1) we have global $L^\infty$-estimates with moderate $\eps$-dependence. Thus in summary, we have shown the $H^\infty$-moderateness of $(g_\eps)$.  

Finally, let $\Omega \subseteq \R^n$ be bounded and open. Choose $j$ sufficiently large so that $\Omega \subseteq K_j$. Then we have for $0 < \eps < 2^{-j}$ by construction of $g_\eps$ that $g_\eps|_\Omega = \phi_\eps|_\Omega$ and hence equality of the corresponding classes in $\G(\Omega)$. Moreover, the latter also implies that $g^2 \approx \mu$, since the support of any test function in $\D(\R^n)$ is contained in some open bounded subset. 
\end{proof}

\begin{remark}\label{Wurzel_rem} (i) Property ($*$) established in course of the proof shows in fact that the class of $(h_\eps)$ in $\G(\R^n)$ as well as $\phi$ are strictly positive generalized functions (cf.\ \cite[Theorem 3.4]{Mayerhofer:08}). In this sense, $\phi$ provides a strictly positive square root of the positive measure $\mu$.

(ii) For specific choices of $\rho$ in $L^1(\R^n) \cap H^\infty(\R^n)$ such that  $\sqrt{\rho} \in H^\infty(\R^n)$ we could obtain that $(\phi_\eps)$ is also $H^\infty$-moderate and directly defines a square root in $\G_{L^2}(\R^n)$ without having to undergo the cut-off procedure in part (2) of Proposition \ref{Wurzel} (which, on the other hand, cannot be avoided for general $\rho \in H^\infty$). For example, putting $\rho(x) = c (1+|x|^2)^{-(n+1)/2}$ with a suitable normalization constant $c > 0$ provides such a mollifier. However, the  above Proposition leaves considerably more flexibility in adapting the regularization to particular applications.
\end{remark}


\section{Generalized function solutions and coherence properties}

We come now to the main existence and uniqueness result for generalized solutions to the Cauchy problem (\ref{SCPDE}-\ref{SCIC}). We recall that a regularization of an arbitrary finite-order distribution which meets the log-type conditions on the coefficients $c_k$ and $V$ in the following statement is easily achieved by employing a re-scaled mollification process as described in \cite{O:89}.

\begin{theorem}\label{exunthm} Let $c_k$ 
 ($k=1,\ldots,n$) as well as $V$ be real\footnote{In the sense that they possess representating nets of real-valued functions.} generalized functions in $\G_{L^{\infty}}(\R^n \times [0,T])$, $f$ in  $\G_{L^2}(\R^n \times [0,T])$, and $g$ be in $\G_{L^2}(\R^n)$. Assume  that the following \emph{log-type conditions} hold for some (hence all) representatives $(c_{k \eps})_{\eps \in ]0,1]}$ of $c_k$ ($k=1\ldots,n$) and $(V_\eps)$ of $V$:
$$
  \norm{\d_t {c_{k \eps}}}{L^\infty} = O(\log(\frac{1}{\eps}))
  \quad \text{and} 
  \quad \norm{\d_t {V_{\eps}}}{L^\infty} = O(\log(\frac{1}{\eps}))
  \quad (\eps \to 0).
$$
In addition let the positivity conditions $c_{k \eps}(x,t) \geq c_0$ for all $(x,t) \in \R^n \times [0,T]$, $\eps\in\,]0,1]$, $k=1,\ldots,n$ with some constant $c_0 > 0$ be met.\footnote{For any other representative  the conditions thus hold for sufficiently small $\eps$ with $c_0 / 2$ instead.}

Then the Cauchy problem 
\begin{align}
  \d_t u - \mathrm{i}\, \sum_{k=1}^n   \d_{x_k} (c_k 
    \d_{x_k} u) 
    - i V u &= f, \label{PDE}\\
    u \mid_{t=0} &= g, \label{IC}
\end{align}
has a unique solution $u \in \G_{L^2}(\R^n \times [0,T])$. 
\end{theorem}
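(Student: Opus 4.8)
The plan is to pass to the $\eps$-level, solve the regularized Cauchy problem classically for each fixed $\eps$, and then show that the resulting net $(u_\eps)$ is $H^\infty(\Om_T)$-moderate and that its class in $\G_{L^2}(\R^n\times[0,T])$ depends only on the classes of the data. First I would fix representatives $(c_{k\eps})$, $(V_\eps)$, $(f_\eps)$, $(g_\eps)$; the hypotheses are stable under change of representative, since a negligible perturbation of $c_k$ or $V$ has $\d_t$-derivative again negligible, hence certainly $O(\log(1/\eps))$. For each fixed $\eps$ the operator $A_\eps(t) = -\sum_k \d_{x_k}(c_{k\eps}(\cdot,t)\,\d_{x_k}\cdot)$ is, thanks to $c_{k\eps}\in W^{\infty,\infty}$ and $c_{k\eps}\ge c_0$, uniformly elliptic with smooth bounded coefficients, hence nonnegative and self-adjoint on $L^2(\R^n)$ with domain $H^2(\R^n)$; together with the bounded real potential $V_\eps(\cdot,t)$ this yields a smooth family of self-adjoint generators $H_\eps(t) = A_\eps(t) - V_\eps(t)$ with common domain. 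Standard theory for linear evolution equations with time-dependent self-adjoint generators then produces a unique solution $u_\eps$ of $\d_t u_\eps = -\mathrm{i}H_\eps(t)u_\eps + f_\eps$, $u_\eps|_{t=0} = g_\eps$, lying in $\Con([0,T],H^\infty(\R^n))$ and, via the equation, smooth up to the boundary of $\Om_T$; the a priori bounds below simultaneously guarantee existence in $H^\infty(\Om_T)$.

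The heart is the moderateness of $(u_\eps)$, obtained from a hierarchy of energy estimates whose structure is dictated by the self-adjoint, positive principal part. At the $L^2$-level the self-adjointness of $A_\eps$ and the reality of $c_{k\eps},V_\eps$ make $\Re\langle -\mathrm{i}H_\eps u_\eps, u_\eps\rangle = 0$, so that $\frac{d}{dt}\ltwo{u_\eps}\le\ltwo{f_\eps}$ and $\ltwo{u_\eps(t)}\le\ltwo{g_\eps} + \int_0^t\ltwo{f_\eps}\,ds$, which is moderate with no exponential factor. At the $H^1$-level I would work with the coercive energy $\langle(A_\eps + K_\eps)u_\eps, u_\eps\rangle$, where $K_\eps$ is a moderate constant making the form positive; upon differentiating, the principal contribution again cancels by self-adjointness, and the only term multiplying the energy is $\langle(\d_t A_\eps)u_\eps, u_\eps\rangle = \sum_k\int(\d_t c_{k\eps})\,|\d_{x_k}u_\eps|^2$, bounded by $\norm{\d_t c_{k\eps}}{L^\infty}/c_0$ times the energy. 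Here the log-type condition is decisive: Gronwall produces the factor $\exp(C\int_0^t\norm{\d_t c_{k\eps}}{L^\infty}\,ds) = O(\eps^{-CT})$, which is moderate, while the positivity $c_{k\eps}\ge c_0$ guarantees that the energy is equivalent to $\nhs{u_\eps}^2$ with $\eps$-independent lower constant.

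I expect the main obstacle to lie in propagating this to all orders, that is, in the $H^\infty$-moderate estimates, by induction on a weighted Sobolev energy of the form $\sum_{|\al|\le s}\langle A_\eps\d^\al u_\eps,\d^\al u_\eps\rangle + \norm{u_\eps}{H^s}^2$ (equivalent to $\norm{u_\eps}{H^{s+1}}^2$ via $c_{k\eps}\ge c_0$), supplemented by trading time-derivatives for spatial ones through the equation in order to reach the mixed norms defining $H^\infty(\Om_T)$. When the time-derivative falls on the principal weight one again meets the factor $\norm{\d_t c_{k\eps}}{L^\infty}$ multiplying the top energy, still log-type and hence admissible. The delicate point is the commutator terms $[\d^\al, A_\eps]u_\eps$ and the higher coefficient derivatives (spatial derivatives of $c_{k\eps}$, and $\d_t^{\ell}c_{k\eps}$ for $\ell\ge 2$), which are only moderate rather than log-type; the estimate closes only because, after integration by parts exploiting the divergence form, each such term couples the top energy to a strictly lower-order energy and can therefore be absorbed by Young's inequality into a small (absorbable) multiple of the top energy plus a moderate source controlled by the induction hypothesis. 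Checking carefully that no merely-moderate coefficient ever survives as a multiplier of the top-order energy, so that the Gronwall exponent stays $O(\log(1/\eps))$ and the resulting bounds stay moderate, is the technical crux.

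Finally, for well-definedness and uniqueness in $\G_{L^2}(\R^n\times[0,T])$ I would show that the solution net changes only negligibly under negligible changes of the data and coefficients. The difference $w_\eps$ of two such solutions satisfies the same Schr\"odinger-type equation, with right-hand side consisting of the negligible differences of $(f_\eps)$ and $(g_\eps)$ together with terms of the form (negligible coefficient difference)$\times$(spatial derivatives of a moderate solution); since the solutions are $H^\infty$-moderate and the coefficient differences negligible, these sources are negligible in every $\nhs{\cdot}$. The very same energy estimates, with their moderate Gronwall factor $\eps^{-CT}$ supplied by the log-type condition, then propagate negligibility from the data to $w_\eps$, giving that $(w_\eps)$ is negligible. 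In particular the solution is unique in $\G_{L^2}(\R^n\times[0,T])$, which completes the proof.
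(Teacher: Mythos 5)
Your overall strategy --- solve at fixed $\eps$, prove moderateness through energy estimates whose Gronwall exponents stay $O(\log(1/\eps))$ thanks to the self-adjoint divergence structure, and get uniqueness by rerunning the estimates on negligible data --- is the paper's strategy, and your fixed-$\eps$ solvability, your $L^2$ and $H^1$ estimates, and your uniqueness argument all match the paper's proof in substance (the paper gets the $H^1$ bound from the Dautray--Lions variational theory, with the same identification of the Gronwall exponent as $O\big(\tfrac{T}{c_0}(\max_k\norm{\d_t c_{k\eps}}{L^\infty}+\norm{\d_t V_{\eps}}{L^\infty})\big)$). The proposal breaks down, however, precisely at the point you yourself call the technical crux, and the mechanism you invoke there does not work. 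Differentiating your weighted energy $\sum_{|\al|\le s}\langle A_\eps\d^\al u_\eps,\d^\al u_\eps\rangle$ in $t$ and inserting the equation, the self-adjointness cancellation leaves the residual term $2\Im\langle A_\eps\d^\al u_\eps,\,[\d^\al,A_\eps]u_\eps\rangle$. Here $[\d^\al,A_\eps]u_\eps$ has order $|\al|+1$ with coefficients $\d_x^{\be}c_{k\eps}$ ($|\be|\ge 1$) that are merely moderate, while $A_\eps\d^\al u_\eps$ has order $|\al|+2$ --- one full order \emph{above} what $E_s\sim\norm{u_\eps}{H^{s+1}}^2$ controls. So this term does not couple the top energy to strictly lower-order energies; it couples it upward, and Young's inequality cannot absorb an order-$(s+2)$ factor into an $H^{s+1}$ energy. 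Every rearrangement by integration by parts either leaves an order-$(s+2)$ factor (equivalently, $\d_t u_\eps$ in $H^{s}$, which via the equation is again two orders beyond) or places a merely-moderate coefficient in front of the full top-order energy, in which case Gronwall produces $\exp(\eps^{-N})$, which is not moderate. Since the hypotheses give log-type control only of $\d_t c_{k\eps}$ --- spatial derivatives of $c_{k\eps}$, and $\norm{c_{k\eps}}{L^\infty}$ itself, are only moderate --- this is a genuine gap, not a routine verification.

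The paper closes the higher-order estimates by a structurally different device: it never commutes derivatives through $A_\eps$ inside an energy. Instead it differentiates the \emph{equation}, so that $\d_{x_j}u_\eps$ solves a Cauchy problem with the \emph{same} principal part --- hence the same Dautray--Lions energy estimate with the same log-controlled constants --- and all differentiated coefficients are relegated to the new right-hand side $f_{1\eps}=\mathrm{i}\sum_k\d_{x_k}(\d_{x_j}c_{k\eps}\,\d_{x_k}u_\eps)+\mathrm{i}\,\d_{x_j}V_\eps\, u_\eps+\d_{x_j}f_\eps$, which enters the estimate as data rather than as a multiplier of the energy in a Gronwall exponent. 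To make this application legitimate, the paper must first establish, at fixed $\eps$, the additional regularity $u_\eps\in L^\infty([0,T],H^2(\R^n))\cap W^{1,\infty}([0,T],H^1(\R^n))$ via mild solutions/evolution systems (Pazy) combined with elliptic regularity --- a step your proposal skips entirely when it asserts $\Con([0,T],H^\infty)$ solvability outright --- and it then iterates, treating spatial derivatives (Step 2) and time derivatives (Step 3) as solutions of such derived Cauchy problems, with already-estimated lower-order terms absorbed into the sources. If you wish to keep the spirit of your scheme, the derivatives you can afford to take are time derivatives (the source of the $\d_t$-differentiated equation has top-order coefficient $\d_t c_{k\eps}$, which is log-type), with spatial regularity recovered from the elliptic estimate $\norm{u_\eps}{H^2}\lesssim \tfrac{1}{c_0}\norm{A_\eps u_\eps}{L^2}+(\text{moderate})\cdot\norm{u_\eps}{H^1}$, whose leading constant is $\eps$-independent; $x$-regularity must come from the operator, not from commutators.
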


\begin{proof} In terms of representatives (\ref{PDE}-\ref{IC}) means that we have a family of Cauchy problems  parametrized by $\eps \in \, ]0,1]$: 
\begin{align}
  \d_t u_\eps - \mathrm{i}\, \sum_{k=1}^n   \d_{x_k} (c_{k\eps} 
    \d_{x_k} u_\eps) 
     - i V_\eps u_\eps &= f_\eps \quad \text{ on }†
       \Omega_T = \R^n \times\, ]0,T[, \label{PDE_eps}\\
    u_\eps \mid_{t=0} &= g_\eps \quad \text{ on } \R^n, \label{IC_eps}
\end{align}
where $c_{k\eps}$ ($k=1,\ldots,n$) 
and $V_\eps$ (are real-valued and) belong to $W^{\infty,\infty}(\Omega_T)$, $c_{k \eps}$ satisfies the log-type and the positivity condition as stated above, $g_\eps \in H^\infty(\R^n)$, and $f_\eps \in H^\infty(\Omega_T)$.

Our strategy is to solve the corresponding problem at fixed, but arbitrary,  parameter value $\eps$ and thereby produce a solution candidate $(u_\eps)$. The substance of the proof lies in the efforts to show that each $u_\eps$ belongs to $H^\infty(\R^n \times ]0,T[)$ and in addition satisfies moderateness estimates in every derivative with respect to the $L^2$-norm. We will obtain basic energy estimates by standard variational methods as discussed by Dautray-Lions in \cite[Chapter XVIII, \pg 7, Section 1]{DL:V5}, but we will need to perform an additional analysis of the dependence of the constants in these estimates on the various norms of the coefficient functions.

\emph{Step 1 (basic estimates and regularity):} We apply the set-up and constructions in \cite[Chapter XVIII, \pg 7, Section 1]{DL:V5} to the Hilbert space triple $H^1(\R^n) \subset L^2(\R^n) \subset H^{-1}(\R^n)$ and the sesquilinear form
$$
   a_\eps(t;\vphi,\psi) := 
   \sum_{k=1}^n \inp{c_{k \eps}(t)\, \d_{x_k} 
   \vphi}{\d_{x_k} \psi}_{L^2(\R^n)} + \inp{V_\eps(t) \vphi}{\psi}
   \qquad (\vphi, \psi \in H^1(\R^n)),
$$
where we have used the short-hand notation $v(t)$ for a function $x \mapsto v(x,t)$. 
The basic conditions  \cite[(7.1) and (7.2), p.\ 621]{DL:V5}  on the quadratic form $a_\eps$  are easily seen to be met: continuous differentiability of the map $t \mapsto a_\eps(t;\vphi,\psi)$ follows from smoothness of $c_{k \eps}$ ($k=1,\ldots,n$) and $V_\eps$, the form  $a_\eps(t;.,.)$ is hermitian since each $c_{k \eps}$ and $V_\eps$ is real-valued, and the positivity conditions on the coefficients $c_{k \eps}$  ($k=1,\ldots,n$) in the statement of  Theorem \ref{exunthm} yield the following coercivity estimate
$$
        a_\eps(t;\vphi,\vphi) + \la_\eps \norm{\vphi}{L^2}^2 \geq c_0 \norm{\vphi}{H^1}^2  \qquad \forall \vphi \in H^1(\R^n), 
$$
where $\la_\eps = c_0 + \norm{V_\eps}{L^\infty}$
(thus we have $\lambda = \lambda_\eps$ and $\alpha = c_0$ in  \cite[(7.2), ii) on p.\ 621]{DL:V5} ). Furthermore, the hypotheses \cite[(7.5) and (7.6), pp.\ 621-622]{DL:V5} on the initial values hold by our assumptions on $g_\eps$ and $f_\eps$. We claim that \cite[Theorem 1, pp.\ 621-622]{DL:V5} in combination with the remark on additional regularity \cite[Remark 3, p.\ 625]{DL:V5} implies that we obtain a unique solution 
\beq \tag{$S_1$}
   u_\eps \in \Con([0,T],H^1(\R^n)) \cap \Con^1([0,T],H^{-1}(\R^n))
\eeq 
to the Cauchy problem (\ref{PDE_eps}-\ref{IC_eps}) for every $\eps \in \, ]0,1]$. In fact, the basic theorem gives existence and uniqueness with $u_\eps \in L^2([0,T],H^1(\R^n))$ and $u_\eps' \in L^2([0,T],H^{-1}(\R^n))$ only. However, as mentioned in \cite[Remark 3, p.\ 625]{DL:V5} the existence proof given in Dautray-Lions directly shows that the solution as well as its $t$-derivative are $L^\infty$-functions of $t$ with values in $H^1$ or $H^{-1}$, respectively.  The same remark states that even continuity with respect to $t$ holds (an earlier reference for this fact is \cite[Remark 10.2, p.\ 302]{LM:72}, which also indicates a proof based on convolution regularization). Finally, we note that thanks to $\Con([0,T],H^1) \subseteq  \Con([0,T],H^{-1})$ an application of \cite[Chapter XVIII, \pg 1, Section 2, Proposition 7, p.\ 477]{DL:V5} shows that the solution belongs to $\Con^1([0,T],H^{-1})$.

Moreover, a careful inspection of all constants appearing in the derivation of the a priori estimate \cite[(7.16) and (7.17) on p.\ 624]{DL:V5} shows that we may deduce the following basic energy estimate uniformly for all $t \in [0,T]$
\beq \label{energy}
  \norm{u_\eps(t)}{H^1}^2 \leq 
     C_{2 \eps} \, e^{C_{1 \eps}}
     \left( \norm{g_{\eps}}{H^1}^2  + 
     \int_0^T \left(\norm{f_\eps(\tau)}{L^2}^2 + 
       \norm{\d_t f_\eps(\tau)}{H^{-1}}^2 \right) d\tau
     \right),
\eeq
where the specifics on the constants $C_{1 \eps}$ and $C_{2 \eps}$ according to the derivation in \cite[pp. 623-624]{DL:V5} are as follows:   $C_{2\eps} = O(T (c_0 + \norm{V_\eps}{L^\infty})) = O(\eps^{-p})$ ($\eps \to 0$) for some $p \in \N$, and  
$$
    C_{1 \eps} = O\Big( \frac{T}{c_0} \cdot
    (\max\limits_{1\leq k \leq n} 
       \norm{\d_t c_{k \eps}}{L^\infty}
       + \norm{\d_t V_{\eps}}{L^\infty}) \Big) 
       \quad (\eps \to 0),
$$
which by the log-type conditions on $(c_{k \eps})$ and $(V_\eps)$ imply moderateness estimates for the first-order spatial derivatives in the form
$$
  \exists N_1 \in \N: \quad
   \norm{\d_{x_l} u_\eps}{L^2(\R^n\times[0,T])} \leq
   \sqrt{T} \sup_{t \in [0,T]} \norm{u_\eps(t)}{H^1} = O(\eps^{-N_1}) 
   \qquad (\eps \to 0, \, l=1,\ldots,n).
$$
As a preparation for the procedure in Step 2 we claim that additional regularity properties for $u_\eps$ hold in the form
\beq \tag{$*$}
  u_\eps \in L^\infty([0,T],H^2(\R^n)) 
    \cap W^{1,\infty}([0,T],H^1(\R^n))
\eeq
(at fixed $\eps$ without precise asymptotic estimates of the norms).  These can be obtained from the concept of mild solutions and evolution systems on $L^2(\R^n)$ (cf.\ \cite[Chapter 5]{Pazy:83}) for the self-adjoint familiy of operators $(A_\eps(t))_{t \in [0,T]}$, where $A_\eps(t) v (x):= \sum_{k=1}^n  \d_{x_k} (c_{k\eps}(x,t) \d_{x_k} v)(x)$ ($v \in H^2(\R^n)$)  with common (i.e., $t$-independent) domain $H^2(\R^n)$: In fact, Equation \eqref{PDE} has the form
$$
   \d_t u_\eps - i A_\eps(t) u_\eps = 
      f_\eps 
            + i V_\eps u_\eps =: F_\eps,
$$ 
where we already know from ($S_1$) that $F_\eps \in \Con([0,T],L^2(\R^n)) \cap \Con^1([0,T],H^{-2}(\R^n))$. Let the strongly continuous evolution system corresponding to $(A_\eps(t))_{t \in [0,T]}$ be denoted by $(U_\eps(t,s))_{0 \leq s \leq t \leq T}$, then we  necessarily have by Duhamel's formula 
$$
  u_\eps(t) = U_\eps(t,0) g_\eps + \int_0^t U_\eps(t,\tau) 
    F_\eps(\tau)\, d\tau
    \in \Con_w^1([0,T],L^2(\R^n)), 
$$
where $\Con_w^1([0,T],L^2(\R^n))$ means weakly continuously differentiable as $t$-dependent distribution on $\R^n$ with values in $L^2(\R^n)$ for every $t$.
Writing the differential equation now in the form
$$
   - i A_\eps(t) u_\eps = - \d_t u_\eps + F_\eps \in 
   \Con_w([0,T],L^2(\R^n))
$$ 
we may employ elliptic regularity (spatial, with respect to $x$) for the second-order operator $A_\eps(t)$ and deduce $u_\eps \in \Con_w([0,T],H^2(\R^n)) \subseteq L^\infty([0,T],H^2(\R^n))$. 
Moreover, we may now state in addition that $F_\eps \in \Con_w([0,T],H^{1}(\R^n))$ and use Duhamel's formula again to show that also $u_\eps \in \Con_w^1([0,T],H^1(\R^n)) \subseteq W^{1,\infty}([0,T],H^1(\R^n))$ as claimed.

\emph{Step 2 (higher $x$-derivatives):} We take the partial $x_j$-derivative on both sides in Equations \eqref{PDE_eps} and \eqref{IC_eps} to obtain a similar differential equation for $\d_{x_j} u_\eps$ in the form
$$
  \d_t \d_{x_j} u_\eps - \mathrm{i}\, \sum_{k=1}^n   
       \d_{x_k} (c_{k\eps} \d_{x_k} \d_{x_j} u_\eps)
  - i V_\eps \d_{x_j} u_\eps
  =  \mathrm{i}\, \sum_{k=1}^n   \d_{x_k} (\d_{x_j} c_{k\eps} 
    \d_{x_k} u_\eps) 
    + i \d_{x_j} V_\eps u_\eps
     + \d_{x_j} f_\eps =: f_{1 \eps}
$$
and the initial condition $\d_{x_j} u_\eps (0) = \d_{x_j} g_\eps$. Thanks to ($*$) we have $f_{1 \eps} \in L^2([0,T],L^2(\R^n))$ and $\d_t f_{1 \eps} \in L^2([0,T],H^{-1}(\R^n))$, which corresponds to conditions (7.5-6) of \cite[Chapter XVIII, \pg 7, Section 1, Theorem 1]{DL:V5}. Thus we may apply ($S_1$) and \eqref{energy} with $\d_{x_j} u_\eps$, $\d_{x_j} g_\eps$, $f_{1\eps}$ replacing $u_\eps$, $g_\eps$, $f_\eps$, respectively, but with the same constants $C_1$ and $C_2$. Collecting the results for $j=1,\ldots,n$ we arrive at
\beq \tag{$S_2$}
   u_\eps \in \Con([0,T],H^2(\R^n)) \cap \Con^1([0,T],L^2(\R^n))
\eeq 
and 
$$
  \exists N_2 \in \N: \quad
   \norm{\d_{x_l} \d_{x_j} u_\eps}{L^2(\R^n\times[0,T])} \leq
   n \sqrt{T} \sup_{t \in [0,T]} \norm{u_\eps(t)}{H^2} = O(\eps^{-N_2}) 
   \qquad (\eps \to 0, \, j,l=1,\ldots,n).
$$
Similarly, taking higher partial $x$-derivatives in Equations \eqref{PDE_eps} and \eqref{IC_eps} produces the same kind of differential equations and initial conditions for these higher derivatives, where we may always consider the already estimated lower order derivatives of $u_\eps$ as part of the right-hand side. Therefore we obtain successively for arbitrary $m\in\N$
\beq \tag{$S_m$}
   u_\eps \in \Con([0,T],H^m(\R^n)) \cap \Con^1([0,T],H^{m-1}(\R^n))
\eeq 
and 
$$
  \exists N_m \in \N \,\forall \al \in \N_0^n, |\al| \leq m: \quad
   \norm{\d_x^\al u_\eps}{L^2(\R^n\times[0,T])} = O(\eps^{-N_m}) 
   \qquad (\eps \to 0).
$$

\emph{Step 3 ($t$-derivatives):} We first note that due to ($S_m$) we have $\d_t u_\eps(0) \in H^\infty(\R^n)$ and Equation \eqref{PDE_eps} evaluated at $t=0$ provides moderate $\eps$-asymptotics of its Sobolev norms. Taking the partial $t$-derivative in Equation \eqref{PDE_eps} now yields again the same kind of differential equation for $\d_t u_\eps$ as above when interpreted in the following way:
$$
  \d_t \d_{t} u_\eps - \mathrm{i}\, \sum_{k=1}^n   
       \d_{x_k} (c_{k\eps} \d_{x_k} \d_{t} u_\eps)
  - i V_\eps \d_{t} u_\eps
  =  \mathrm{i}\, \sum_{k=1}^n   \d_{x_k} (\d_{t} c_{k\eps} 
    \d_{x_k} u_\eps) 
    + i \d_{t} V_\eps u_\eps
     + \d_{t} f_\eps =: \widetilde{f}_{1 \eps}.
$$
By property ($S_m$) (now replacing ($*$) in a similar argument in Step 2) we deduce the required regularity conditions on $\widetilde{f}_{1 \eps}$ and $\d_t \widetilde{f}_{1 \eps} \in L^2([0,T],H^{-1}(\R^n))$ to apply ($S_1$) and \eqref{energy} now with $\d_t u_\eps$, $\d_t u_\eps(0)$, $\widetilde{f}_{1\eps}$ replacing $u_\eps$, $g_\eps$, $f_\eps$, respectively. We obtain for $\al \in \N_0^n$, $|\al| \leq 1$,
$$
   u_\eps \in \Con^1([0,T],H^1(\R^n)) \cap \Con^2([0,T],H^{-1}(\R^n)),
  \quad 
  \exists M_1 \in \N: \, 
   \norm{\d_t \d_{x}^\al u_\eps}{L^2(\R^n \times [0,T])} = 
   O(\eps^{-M_1}).
$$
Furthermore, inserting here the same procedure as in Step 1 now yields for any $m \in \N$ and for all $\al \in \N_0^n$, $|\al| \leq m$,
$$
   u_\eps \in \Con^1([0,T],H^m(\R^n)) \cap 
   \Con^2([0,T],H^{m-1}(\R^n)),
  \quad 
  \exists M_m \in \N: \, 
   \norm{\d_t \d_x^\al u_\eps}{L^2(\R^n \times [0,T])} 
   = O(\eps^{-M_m}).
$$
Thus in turn we may now deduce from the above differential equation for $\d_t u_\eps$ that $\d_t^2 u_\eps (0) \in H^\infty(\R^m)$ holds with moderate spatial Sobolev norms. Hence we may now play the same game again with $\d_t^2 u_\eps$ etc.\ and finally arrive at the statements that for arbitrary $d,m \in \N$ and for all $\al \in \N_0^n$, $|\al| \leq m$, we have
$$
   u_\eps \in \Con^d([0,T],H^m(\R^n)) \cap 
   \Con^{d+2}([0,T],H^{m-1}(\R^n)),
  \; 
  \exists M_{md} \in \N: \, 
   \norm{\d_t^d \d_x^\al u_\eps}{L^2(\R^n \times [0,T])} = 
   O(\eps^{-M_{md}}).
$$
Therefore we may conclude that $(u_\eps)$ is moderate net and its class  $\G_{L^2}(\R^n \times [0,T])$ defines a
solution $u$ to the Cauchy problem (\ref{PDE}-\ref{IC}).

The proof of uniqueness requires to show negligibility of any solution $(u_\eps)$ assuming that the right-hand side $(f_\eps)$ and the initial data $(g_\eps)$ are negligible. But this follows successively from the corresponding variant of the energy estimate \eqref{energy} applied at each step of the sequence of differentiated differential equations used in course of the existence proof.
\end{proof}


We note that in case of smooth coefficients an easy integration by parts argument shows that any solution to the Cauchy problem obtained from the variational method as in \cite[Chapter XVIII, \pg 7, Section 1]{DL:V5})
also provides a solution in the sense of distributions. The following statement ensures in addition the coherence with the Colombeau generalized solution.

\begin{corollary}\label{cor} Let $V$ 
and $c_k$ ($k=1,\ldots,n$) belong to $\Cinf(\Omega_T) \cap L^\infty(\Omega_T)$ with bounded time derivatives of first-order,  $g_0 \in H^1(\R^n)$, and $f_0 \in \Con^1([0,T],L^2(\R^n))$. Let $u$ denote the unique Colombeau generalized solution to the Cauchy problem (\ref{PDE}-\ref{IC}), where $g$, $f$ denote standard embeddings of $g_0$, $f_0$, respectively. Then $u \approx w$, where $w \in \Con([0,T],H^1(\R^n))$ is the unique distributional solution obtained from the variational method. 
\end{corollary}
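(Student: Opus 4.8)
The plan is to work with the representative $(u_\eps)$ of $u$ constructed in the proof of Theorem \ref{exunthm}, together with the standard convolution regularizations $c_{k\eps},V_\eps$ of $c_k,V$ and $g_\eps,f_\eps$ of $g_0,f_0$, and to show directly that $r_\eps := u_\eps - w \to 0$ in $L^2(\Omega_T)$; since association is independent of the chosen representative and $L^2$-convergence implies convergence in $\D'(\Omega_T)$, this yields $u\approx w$. Subtracting the equation for $w$ from \eqref{PDE_eps} and rewriting $c_{k\eps}\d_{x_k}u_\eps - c_k\d_{x_k}w = c_{k\eps}\d_{x_k}r_\eps + (c_{k\eps}-c_k)\d_{x_k}w$ (and likewise $V_\eps u_\eps - Vw = V_\eps r_\eps + (V_\eps-V)w$), one sees that $r_\eps$ solves the same Schr\"odinger-type equation with coefficients $c_{k\eps},V_\eps$, initial value $g_\eps-g_0$, and right-hand side
\[
  h_\eps = (f_\eps - f_0) + \mathrm{i}\sum_{k=1}^n \d_{x_k}\big((c_{k\eps}-c_k)\,\d_{x_k}w\big) + \mathrm{i}(V_\eps - V)\,w,
\]
which collects exactly the errors produced by replacing the exact smooth data and coefficients by their regularizations.

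First I would record a preparatory \emph{uniform} bound. Under the hypotheses of the corollary the constants $C_{1\eps},C_{2\eps}$ in the basic energy estimate \eqref{energy} stay bounded as $\eps\to0$: indeed $\norm{V_\eps}{L^\infty}\leq\norm{V}{L^\infty}$, and $\norm{\d_t c_{k\eps}}{L^\infty}\leq\norm{\d_t c_k}{L^\infty}$, $\norm{\d_t V_\eps}{L^\infty}\leq\norm{\d_t V}{L^\infty}$ are bounded by hypothesis — this is precisely what the assumption $c_k,V\in\Cinf(\Omega_T)\cap L^\infty(\Omega_T)$ with bounded first-order time derivatives gains over the mere log-type growth used in Theorem \ref{exunthm}. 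Since moreover $g_\eps\to g_0$ in $H^1(\R^n)$ and $f_\eps\to f_0$, $\d_t f_\eps\to\d_t f_0$ in $\Con([0,T],L^2(\R^n))$ (the latter using $f_0\in\Con^1([0,T],L^2)$), the right-hand side of \eqref{energy} is bounded uniformly in $\eps$, so $(u_\eps)$ is bounded in $\Con([0,T],H^1(\R^n))$ uniformly in $\eps$. Together with $w\in\Con([0,T],H^1(\R^n))$ this gives a uniform bound $\sup_{\eps}\sup_{t\in[0,T]}\norm{r_\eps(t)}{H^1}\leq C'$.

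Next I would run the $L^2$-energy identity for $r_\eps$, which is justified in the Dautray--Lions framework \cite[Chapter XVIII]{DL:V5} because $r_\eps\in L^2([0,T],H^1)$ with $\d_t r_\eps\in L^2([0,T],H^{-1})$. Testing the difference equation with $r_\eps$ and taking real parts, the principal part and the zeroth-order term drop out since $c_{k\eps}$ and $V_\eps$ are real-valued (both $\mathrm{i}\sum_k\inp{c_{k\eps}\d_{x_k}r_\eps}{\d_{x_k}r_\eps}$ and $\mathrm{i}\inp{V_\eps r_\eps}{r_\eps}$ are purely imaginary), leaving
\[
  \tfrac{1}{2}\,\tfrac{d}{dt}\norm{r_\eps(t)}{L^2}^2 = \Re\,\dis{h_\eps(t)}{r_\eps(t)}.
\]
The delicate contribution is the divergence-form coefficient error: after one integration by parts it equals $-\mathrm{i}\sum_k\inp{(c_{k\eps}-c_k)\d_{x_k}w}{\d_{x_k}r_\eps}$, which I would estimate by $\sum_k\norm{(c_{k\eps}-c_k)\d_{x_k}w}{L^2}\,\norm{r_\eps}{H^1}$ and then control through the uniform bound $\norm{r_\eps}{H^1}\leq C'$; the remaining two error terms pair directly against $\norm{r_\eps}{L^2}$. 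After a Cauchy--Schwarz and Young step followed by Gr\"onwall's lemma this produces
\[
  \sup_{t\in[0,T]}\norm{r_\eps(t)}{L^2}^2 \leq e^{T}\Big(\norm{g_\eps-g_0}{L^2}^2 + \int_0^T \delta_\eps(\tau)\,d\tau\Big),
\]
where $\delta_\eps$ gathers $\norm{f_\eps-f_0}{L^2}^2$, $\norm{(V_\eps-V)w}{L^2}^2$ and $C'\sum_k\norm{(c_{k\eps}-c_k)\d_{x_k}w}{L^2}$.

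Finally I would check that each error on the right tends to $0$. Convergence of the data handles $\norm{g_\eps-g_0}{L^2}$ and $\int_0^T\norm{f_\eps-f_0}{L^2}^2$, and for the coefficient errors I would use that $c_{k\eps}\to c_k$ and $V_\eps\to V$ pointwise a.e.\ and locally uniformly while remaining bounded by $\norm{c_k}{L^\infty}$, $\norm{V}{L^\infty}$; dominated convergence against the fixed $L^2(\Omega_T)$-weights $\d_{x_k}w$ and $w$ (which lie in $L^2(\Omega_T)$ because $w\in\Con([0,T],H^1(\R^n))$) then forces $\norm{(c_{k\eps}-c_k)\d_{x_k}w}{L^2(\Omega_T)}\to0$ and $\norm{(V_\eps-V)w}{L^2(\Omega_T)}\to0$. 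Hence $\sup_t\norm{r_\eps(t)}{L^2}\to0$, so $u_\eps\to w$ in $L^2(\Omega_T)$ and a fortiori in $\D'(\Omega_T)$, i.e.\ $u\approx w$. I expect the main obstacle to be the divergence-form coefficient-error term: for a Schr\"odinger-type, energy-conserving equation it cannot be absorbed at the $L^2$-level (it only lives in $H^{-1}$), so it genuinely requires the a priori uniform $H^1$-bound on $r_\eps$, and that bound is available only because the smoothness and the uniform $L^\infty$- and time-derivative bounds on the coefficients keep the constants in \eqref{energy} from blowing up in $\eps$.
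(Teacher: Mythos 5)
Your proof is correct, but it takes a genuinely different route from the paper's, rooted in a different reading of how the smooth coefficients enter the Colombeau problem. The paper does \emph{not} regularize $c_k$ and $V$: being smooth, they enter as constant nets $c_{k\eps}=c_k$, $V_\eps=V$, and only $g_0$, $f_0$ are mollified. Consequently $v_\eps:=u_\eps-w$ satisfies the \emph{same} equation with data errors $f_\eps-f_0$, $g_\eps-g_0$ only --- no coefficient-error terms at all --- and a single application of \eqref{energy}, whose constants $C_{1\eps}$, $C_{2\eps}$ are now $\eps$-independent, yields norm convergence $u_\eps\to w$ in $\Con([0,T],H^1(\R^n))$, i.e.\ more than association. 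By mollifying the coefficients as well, you create the divergence-form error $\mathrm{i}\sum_k\d_{x_k}\big((c_{k\eps}-c_k)\d_{x_k}w\big)$, which lives only in $H^{-1}$ and forces your two-tier argument (uniform $H^1$ bound on $u_\eps$ from \eqref{energy} with $\eps$-uniform constants, then an $L^2$-level Gr\"onwall combined with dominated convergence), giving the weaker but still sufficient $L^2$-convergence. What your route buys is robustness: it shows the coherence $u\approx w$ irrespective of whether the smooth coefficients are embedded as constant nets or by convolution --- these are different, merely associated, elements of $\G_{L^\infty}(\R^n\times[0,T])$ and hence a priori define different generalized solutions $u$; what the paper's route buys is a much shorter proof and the stronger $\Con([0,T],H^1)$-convergence. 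Two small caveats: the $u$ of the corollary is, per the paper's own proof, the constant-net-coefficient solution, so strictly you should add the one-line remark that your argument specializes to $c_{k\eps}=c_k$, $V_\eps=V$, whereupon all coefficient errors vanish identically; and the bounds $\norm{V_\eps}{L^\infty}\leq\norm{V}{L^\infty}$, $\norm{\d_t c_{k\eps}}{L^\infty}\leq\norm{\d_t c_k}{L^\infty}$ presuppose a nonnegative mollifier --- a general Colombeau mollifier with vanishing moments only gives these up to the harmless factor $\norm{\rho}{L^1}$.
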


\begin{proof} Let $(u_\eps) \in u$ be as in the proof of Theorem \ref{exunthm}, then $v_\eps := u_\eps - w$ satisfies the following Cauchy problem
$$
  \d_t v_\eps - \mathrm{i}\, \sum_{k=1}^n   \d_{x_k} (c_{k} 
    \d_{x_k} v_\eps) 
    - i V v_\eps = f_\eps - f_0, \qquad
    v_\eps \mid_{t=0} = g_\eps - g_0.
$$
Moreover, we have the energy estimate \eqref{energy}, where now both $C_{1 \eps} =: C_1$ and $C_{2 \eps} =: C_2$ both are independent of $\eps$, in the form
$$
  \sup_{t \in [0,T]} \norm{v_\eps(t)}{H^1}^2 \leq 
     C_2 e^{C_1}
     \Big( \norm{g_{\eps} - g_0}{H^1}^2 +  
     \int_0^T \left(\norm{f_\eps(\tau) - f_0(\tau)}{L^2}^2  
      + 
       \norm{\d_t f_\eps(\tau) - \d_t f_0(\tau)}{H^{-1}}^2 
       \right) d\tau \Big).
$$
Since the smoothing process via mollification ensures convergence to $0$ of the norms on the right-hand side of the above estimate we obtain even norm convergence of $u_\eps$ to $w$. 
\end{proof}

\begin{remark}
To compare our main result with the evolution systems solution constructed in \cite{dHHO:08} for the case of Sobolev coefficients and  spatial dimension $n=2$ we drop the additional pseudodifferential aspects in that model. Then we obtain also coherence of the Colombeau concept with the functional analytic solution as we sketch in the following discussion:  

Let $0 < r < 1$ and $V = 0$, $c_{10}, c_{20} \in \Con^1([0,T],H^{r+1}(\R^2))$, $g_0 \in H^2(\R^2)$, $f_0 \in \Con^1([0,T],L^2(\R^2))$. Let $v$ denote the unique $H^2$-valued solution to the Cauchy problem corresponding to (\ref{PDE}-\ref{IC}) with initial value $g_0$ and right-hand side $f_0$ (and coefficients $c_{j0}$ replacing $c_j$) according to \cite[Theorem 4.2]{dHHO:08}. Let $g$, $f$, $c_1$, $c_2$ denote the embeddings of $g_0$, $f_0$, $c_{10}$, $c_{20}$ into corresponding Colombeau spaces, respectively, and let $u \in \G_{L^2}(\R^2 \times [0,T])$ be the unique Colombeau solution to (\ref{PDE}-\ref{IC}). Subtracting the equations satisfied by a representative $(u_\eps)$ of $u$ and $v$ we deduce the following equations for the difference $h_\eps := u_\eps - v$
$$
  \d_t h_\eps - \mathrm{i}\, \sum_{k=1}^2   \d_{x_k} (c_{k \eps} 
    \d_{x_k} h_\eps) = f_\eps - f_0 + 
    \mathrm{i}\, \sum_{k=1}^2   \d_{x_k} ((c_{k \eps} - c_{k0}) 
    \d_{x_k} v) =: \widetilde{f}_\eps, \quad
    h_\eps \mid_{t=0} = g_\eps - g_0,
$$
which imply an energy estimate of the form \eqref{energy} for $h_\eps$ and with suitably adapted initial value and right-hand side $\widetilde{f}_\eps$ instead. 
As noted in the proof of Corollary \ref{cor} we have appropriate norm convergence of the standard regularizations $g_\eps$, $f_\eps$ of the data $g_0$,$f_0$. In addition, we now have to call on uniform boundedness of $\norm{\d_t c_{\eps}}{L^\infty(\R^2\times[0,T])}$ and on the $\Con^1([0,T],H^{r+1}(\R^2))$-norm convergence $c_{k \eps} - c_{k0} \to 0$ ($\eps \to 0$). Using the continuity of $H^{r+1}(\R^2) \cdot H^2(\R^2) \to H^{r+1}(\R^2)$ we obtain $\widetilde{f}_\eps \to 0$ in $\Con^1([0,T],H^{r+1}(\R^2))$. Thus we finally obtain the convergence $u_\eps \to v$  in $\Con([0,T],H^1(\R^2))$, and in particular we have $u \approx v$. 
\end{remark}


\begin{example} Consider the strictly positive square root of $\de$  represented by $(\sqrt{\rho_\eps})_{\eps\in\,]0,1]}$, where $\rho$ is a mollifier as in Proposition \ref{Wurzel}. (Note that we obtain $h_\eps = \de * \rho_\eps = \rho_\eps$ in this case.) To simplify technical matters, assume in addition that $\rho$ satisfies the conditions discussed in Remark \ref{Wurzel_rem}(ii) and moreover $\sqrt{\rho} \in L^1(\R^n)$. (For example, any suitably normalized function of the form $x \mapsto (1+|x|^2)^{-m/2}$ with $m > 2n$ would do.) Then we may consider the class $g \in \G_{L^2}(\R^n)$, given directly by $(\sqrt{\rho_\eps})_{\eps\in\,]0,1]}$, as a square root of $\de$. 

We have $g^2 \approx \de$, but $g \approx 0$, which is easily seen by action on a test function $\vphi$ upon substituting $y = x/\eps$ in $\int \!\sqrt{\rho_\eps(x)}\, \vphi(x)dx = \eps^{n/2} \int\! \sqrt{\rho(y)}\, \vphi(\eps y)dy$ and applying dominated convergence (thereby using that $\sqrt{\rho} \in L^1$). In essence, this effect has already been observed earlier in the generalized function model of  ultrarelativistic Reissner-Nordstr{\o}m fields in \cite[Equations (15) and (17)]{Steinbauer:97}.

The Cauchy problem (\ref{PDE}-\ref{IC}) with generalized initial value $g$, right-hand side $f=0$, constant coefficients $c_k = 1$ ($k=1,\ldots,n$) 
and $V = 0$, written out for representatives then reads 
$$
    \d_t u_\eps = i \Delta u_\eps, 
      \quad u_\eps|_{t=0} = \sqrt{\rho_\eps}.
$$
The solution is given by the action of the strongly continuous unitary group $U_t := \exp(i t \Delta)$ ($t \in \R$) of operators on $L^2(\R^n)$ in the form $u_\eps(t,x) = (U_t \sqrt{\rho_\eps})(x)$. 

Let $t \in \R$ and $\mu^t_\eps$ denote the positive measure on $\R^n$ with density function $|u_\eps(t,.)|^2$ for the Lebesgue measure. By unitarity of $U_t$ we obtain
$$
  \mu^t_\eps(\R^n) = \int_{\R^n} |u_\eps(t,x)|^2\, dx =
   \int_{\R^n} (U_t \sqrt{\rho_\eps})(x) \cdot
     \ovl{(U_t \sqrt{\rho_\eps})(x)}\, dx = 
     \int_{\R^n} |\sqrt{\rho_\eps(x)}|^2\, dx
     = \int_{\R^n} \rho_\eps(x)\, dx = 1, 
$$
hence $\{\mu^t_\eps : t \in \R, \eps \in \, ]0,1]\}$ is a family probability measures on $\R^n$ and $\norm{\mu^t_\eps}{\text{var}} = 1$ ($t \in \R$, $\eps \in \, ]0,1]$) holds in the Banach space of finite measures.  

We claim that for any $t \neq 0$ the net $(\mu^t_\eps)_{\eps \in \, ]0,1]}$  converges to $0$ with respect to the vague topology on finite measures (cf.\ \cite[\pg 30]{Bauer:01}): 
 Since $\sqrt{\rho_\eps} \in L^1(\R^n)$ we obtain from the $L^1$-$L^\infty$-estimate for the Schr\"odinger propagator (\cite[\pg 4.4, Theorem 1]{Rauch:91}) that $\norm{u_\eps(t,.)}{L^\infty} \leq \norm{\sqrt{\rho_\eps}}{L^1} / (4 \pi |t|)^{n/2}$ and therefore
for any $\psi \in \Conc(\R^n)$
\begin{multline*}
  |\dis{\mu^t_\eps}{\psi}| \leq 
    \int_{\R^n} |u_\eps(t,x)|^2| \, |\psi(x)|\, dx
    \leq (4 \pi |t|)^{-n}\, \norm{\psi}{L^1} 
       \norm{\sqrt{\rho_\eps}}{L^1}\\
    =  (4 \pi |t|)^{-n}\, \norm{\psi}{L^1} \norm{\sqrt{\rho}}{L^1}\,  
      \eps^{n/2} \to 0 \qquad (\eps \to 0).
\end{multline*} 
The same obviously holds with $\psi \in \S(\R^n)$, hence also $\mu^t_\eps \to 0$ in $\S'(\R^n)$ (when $t \neq 0$), whereas $(\mu^t_\eps)_{\eps \in \, ]0,1]}$ can certainly not be weakly convergent as net of finite measures (i.e., in the weak-$*$ topology in the dual of the space of bounded continuous functions on $\R^n$) since $\dis{\mu^t_\eps}{1} = \mu^t_\eps(\R^n) = 1 \not\to 0$ as $\eps \to 0$ (cf.\ also \cite[Theorem 30.8]{Bauer:01}).

\end{example}




\paragraph{\emph{Acknowledgement:}} The author wishes to express his sincere thanks to two anonymous referees. In particular, we thank the one for hints to additional references of related work, and we are deeply grateful to the second referee for patient comments and clarifications that lead to crucial corrections in Theorem 3.1 and its proof.

\bibliography{gue}
\bibliographystyle{abbrv}

\end{document}